\def\NAT@def@citea{\def\@citea{\NAT@separator}}% Suppress spaces between citations using natbib.sty
\theoremstyle{plain}% Theorem-like structures provided by amsthm.sty
\newtheorem{theorem}{Theorem}[section]
\newtheorem{lemma}[theorem]{Lemma}
\newtheorem{corollary}[theorem]{Corollary}
\theoremstyle{definition}
\newtheorem{example}[theorem]{Example}
\theoremstyle{remark}
\newtheorem{remark}{Remark}
\begin{document}

%\articletype{ARTICLE TEMPLATE}% Specify the article type or omit as appropriate

\title{Singularities of a characteristic Cauchy problem for a PDE with singular coefficients}

\author{
\name{Mohamed Amine Kerker\textsuperscript{a}\thanks{Email: mohamed-amine.kerker@univ-annaba.dz}}
\affil{\textsuperscript{a}Laboratory of Applied Mathematics, Badji Mokhtar University-Annaba, P.O. Box 12, Annaba 23000 Algeria}
}

\maketitle

\begin{abstract}
In this paper we give an explicit representation of the solutions of a characteristic Cauchy problem for a class of PDEs with singular coefficients. We give the explicit solutions in terms of the Gauss hypergeometric functions, which enable us to study the singularities and the analytic continuation. Our results are illustrated through some examples.
\end{abstract}

\begin{keywords}
Gauss hypergeometric function; Analytic continuation; Singular solution
\end{keywords}

\section{Introduction}
In \cite{Treves}, Treves studied the Cauchy problem for the partial differential equation
\begin{equation}
x^2u_{tt}-u_{xx}+\lambda u_t=0.\label{Trv}
\end{equation}

By using the concatenation method, he showed that the uniqueness fails if $\lambda$ is an odd positive integer. Beals and Kannai \cite{Beals} constructed exact global fundamental solutions for  a singular hyperbolic equation generalizing (\ref{Trv}): 
\begin{equation}
x^{2k-2}u_{tt}-u_{xx}+\lambda (k-1)x^{k-2} u_t=0.
\end{equation}

In \cite{Bentrad06}, Bentrad constructed singular solutions for the following equation with analytic initial data,
\begin{equation}
x^ku_{tt}-t^qx^pu_{xx}+c_1t^qx^{p-1}u_x+c_2t^qx^{p-2}u=0,\label{ben}
\end{equation}
as series with hypergeometric terms.

\vspace{0.3cm}

In this paper, we discuss the singularities of the solutions of a characteristic Cauchy problem for a class of partial differential equations with singular coefficients, which generalizes (\ref{ben}). More precisely, we will consider, in a neighborhood of the origin of $\mathbb{C}^2$, the following analytic Cauchy problem:
\begin{equation}
\left\{ 
\begin{array}{l}
L_\gamma u:=x^mL_1(t,\partial_t) u-t^nx^{p-2}L_2(x,\partial_x) u=0,  \\
u(0,x)=u_0 (x),\\
u_t(0,x)=0,\label{pb}
\end{array}
\right.
\end{equation}
where 
$$L_1(t,\partial_t)=\partial^2_t +\frac{\gamma}{t}\partial_t,$$
$$L_2(x,\partial_x)=x^2\partial^2_x +Ax\partial_x +B,$$ 
and where $m, n, p\in\mathbb{N}$, such that $q=m-p+2>0$, and $\gamma, A,B\in\mathbb{C}$.\\

We shall show that near the origin the solution of (\ref{pb}) is ramified around the union of characteristic curves:
$$
K_1 : x=0\quad \text{and }\quad K_2 : x^{q}-\left( \frac{q}{n+2}\right)^2 t^{n+2}=0.
$$

Generally it is difficult to investigate the properties of the singularities of solutions for PDEs. A natural approach is to represent the solutions explicitly, which makes the study of their singularities easier. Our method is to construct solutions in terms of Gauss hypergeometric functions (GHF for short). Since it has intrinsic singularities, the GHF was used successfully, in many papers, to construct explicit solutions and then study their singularities and analytic continuation, see e.g.\cite{Bentrad11,Bentrad06,Kerker,Tsutsui,Urabe},  and references therein.

%\medskip 

\section{Hypergeometric solutions}
We first recall some properties of the Gauss hypergeometric function, which will be used throughout this paper. Next, we reduce the equation $L_{\gamma }u=0$ to a
special ordinary differential equation, and then select those with analytic Cauchy data.
\subsection{The Gauss hypergeometric function}
The Gauss hypergeometric function plays an important role in mathematical analysis and its application. It is defined for $c\notin -\mathbb{N}$ by analytic continuation of the sum of the hypergeometric series
\begin{equation*}
F\left( a,b,c,z\right) :=\sum_{i=0}^{\infty }\frac{\left( a\right)
_{i}\left( b\right) _{i}}{\left( c\right) _{i}i!}z^{i},
\end{equation*}%
where $(\lambda)_i$ denotes the Pochhammer symbol or the shifted factorial, defined as
\begin{equation*}
\left( \lambda\right) _{i}=\frac{\Gamma \left( \lambda+i\right) }{\Gamma \left( \lambda\right) 
}=\lambda\left( \lambda+1\right) ...\left( \lambda+i-1\right) .
\end{equation*}%
It arises naturally in the solution of the Gauss hypergeometric linear differential equations, with parameters $(a,b,c)$,
$$
z(1-z)y''+\left[ c-(1+a+b)z\right] y'-aby=0,
$$
which is a Fuchsian equation with three singularities: $0,1$ and $\infty$. Furthermore, the GHF is ramified around these three singularities. Its principal branch is the one defined on the cut plane $\vert \arg(1-z)\vert<\pi$. 
\subsection{Reduction to a hypergeometric differential equation}
\begin{lemma}
\label{lemma}The equation $L_{\gamma }u=0$, with $u=x^{l}w\left( z\right) $
and $$z(t,x)=\left(\frac{q}{n+2}\right)^2\frac{t^{n+2}}{x^{q}},$$ is reduced to the Gauss hypergeometric equation with parameters $(a,b,c)$, where 
\begin{equation}
a=-\frac{\alpha+l}{q},\ b=\frac{1+\alpha-A-l}{q},\ c=\frac{n+\gamma+1}{n+2},\label{prm} 
\end{equation}
and $\alpha$ is a parameter such that $$\alpha (\alpha -A+1)=-B.$$
\end{lemma}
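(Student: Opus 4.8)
The plan is to carry out the substitution directly and track how the change of variables collapses the two-variable operator into a single ODE in $z$. I would first set $\kappa=\left(\frac{q}{n+2}\right)^2$ so that $z=\kappa\,t^{n+2}x^{-q}$, and compute the chain-rule derivatives of $z$. The decisive structural feature is that each derivative returns $z$ times a monomial: explicitly $z_t=(n+2)\frac{z}{t}$, $z_{tt}=(n+2)(n+1)\frac{z}{t^2}$, $z_x=-q\frac{z}{x}$, and $z_{xx}=q(q+1)\frac{z}{x^2}$. This homogeneity is exactly what will allow all powers of $t$ and $x$ to factor out cleanly at the end.

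Next I would apply $L_1$ and $L_2$ separately to $u=x^{l}w(z)$. Using the formulas for $z_t,z_{tt}$ gives
\begin{equation*}
L_1 u=x^{l}\,\frac{n+2}{t^2}\Big[(n+2)z^2w''+(n+1+\gamma)zw'\Big].
\end{equation*}
For $L_2$ I would exploit that it is an Euler-type operator in $x$: substituting $u_x=x^{l-1}(lw-qzw')$ and the expression for $u_{xx}$, and collecting terms, yields
\begin{equation*}
L_2 u=x^{l}\Big[q^2z^2w''+q(q+1-2l-A)zw'+\big(l^2+(A-1)l+B\big)w\Big],
\end{equation*}
where the coefficient of $w$ is $l(l-1)+Al+B$ after gathering the contributions of $x^2\partial_x^2$, $Ax\partial_x$, and $B$.

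I would then assemble $L_\gamma u=x^mL_1u-t^nx^{p-2}L_2u$. The key bookkeeping step uses $q=m-p+2$, so $m+l=(l+p-2)+q$, to extract the common factor $x^{l+p-2}$, together with the identity $x^{q}=\kappa\,t^{n+2}/z$, which converts the stray $x^{q}/t^{2}$ in the first term into $\kappa\,t^{n}/z$. Since $\kappa(n+2)=q^2/(n+2)$, after dividing by the common factor $q^2x^{l+p-2}t^{n}$ everything becomes a function of $z$ alone, and the $z^2w''$ terms combine into $z(1-z)w''$. This produces $z(1-z)w''+[c-(1+a+b)z]w'-ab\,w=0$, with $c=\frac{n+\gamma+1}{n+2}$ reading off immediately from the $w'$ constant term, and $1+a+b=\frac{q+1-2l-A}{q}$ matching $a+b=\frac{1-A-2l}{q}$ computed from the stated $a,b$.

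The most delicate matching, and the step I expect to be the main obstacle, is the constant term $ab=\frac{l^2+(A-1)l+B}{q^2}$: here the power bookkeeping alone is not enough, and one must invoke the defining quadratic relation for $\alpha$. Expanding $(\alpha+l)(1+\alpha-A-l)=\alpha^2+(1-A)\alpha+l(1-A)-l^2$ and substituting $\alpha^2+(1-A)\alpha=-B$, which is precisely $\alpha(\alpha-A+1)=-B$, collapses this to $-\big(l^2+(A-1)l+B\big)$, so that $ab=-\frac{(\alpha+l)(1+\alpha-A-l)}{q^2}$ takes the required value. The choice of the constant $\kappa$ in the definition of $z$ is exactly what is needed to force the leading coefficients into the canonical $z(1-z)$ form, completing the identification with the Gauss hypergeometric equation of parameters $(a,b,c)$.
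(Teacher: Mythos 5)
Your proposal is correct and takes essentially the same route as the paper: substitute $u=x^{l}w(z)$ with $z=\left(\frac{q}{n+2}\right)^{2}t^{n+2}x^{-q}$, reduce to a hypergeometric ODE, and read off $(a,b,c)$ via the quadratic relation $\alpha(\alpha-A+1)=-B$; the only difference is that you carry out the computation in full, where the paper merely states the resulting equation. Note that your coefficient of $w$, namely $\frac{l^{2}+(A-1)l+B}{q^{2}}$, is the one actually consistent with the stated parameters, whereas the intermediate equation displayed in the paper's proof writes $\frac{l(-1+A+l)}{4}$, which appears to be a typo (it agrees with yours only when $q=2$ and $B=0$).
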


\begin{proof}
Let $u\left( t,x\right) =x^{l}w\left( z\right) $ with $z=(\frac{q}{n+2})^2\frac{t^{n+2}}{x^{q}}$.
Substituting $x^{l}w$ for $u$, $L_{\gamma }u=0$ we obtain: 
\begin{equation}
z\left( 1-z\right) w''+\left[ \frac{\gamma+n+1}{n+2} -\left( m-p+3-2l-A\right) \frac{z}{q}\right] w'-\frac{l\left( -1+A+l\right) }{4}w=0.  \label{eq}
\end{equation}%

Therefore, if $c \notin \mathbb{Z}$ a fundamental system of solutions of (\ref{eq}),\ for $|z|<1,$ is given by%
\begin{equation*}
w_{1}(z)=F(a,b,c ,z),
\end{equation*}
\begin{equation*}
w_{2}\left( z\right) =z^{1-c }F(1-c+a,1-c+b,2-c ,z).
\end{equation*}%
\end{proof}
\subsection{Solutions with special Cauchy data}
Let $S:t=0$ be the initial curve, and $K=K_1 \cup K_2$ with
$$
K_1 : x=0,\quad \quad K_2 : x^{q}-\left( \frac{q}{n+2}\right)^2 t^{n+2}=0,
$$
and consider in the neighborhood of the origin of $\mathbb{C}^{2}$,
\begin{equation*}
\Omega_{r}=\left\lbrace (t,x) \in\mathbb{C}^2; \left\vert x^{q}-\left( \frac{q}{n+2}\right)^2 t^{n+2}\right\vert<r\right\rbrace , 
\end{equation*}%
the following Cauchy problem 
\begin{equation*}
(\mathcal{P})\quad\left\{ 
\begin{array}{l}
P_\gamma U_l=0,\\
U_l(0,x)=x^l,\\
\partial_t U_l(0,x)=0.
\end{array}
\right.
\end{equation*}
%\begin{leftbar}
\begin{theorem}  Suppose that $\gamma$ is not a negative integer. If $c$, $c-a-b$ and $a-b\notin\mathbb{Z}$, the Cauchy problem $(\mathcal{P})$ has a unique holomorphic solution on the universal covering space $\mathcal{R}(\Omega _{r}-K)$. Moreover,  the solution has the form 
\begin{equation}
U_l(t,x)=x^lF(a,b,c,z),\label{sol}
\end{equation}%
where
$$
z=\left(\frac{q}{n+2}\right)^2\frac{t^{n+2}}{x^{q}},
$$
and the parameters $a,b$ and $c$ are given in (\ref{prm}).
\end{theorem}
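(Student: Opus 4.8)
The plan is to prove the three assertions in turn---that the explicit function in (\ref{sol}) solves $(\mathcal P)$, that it is holomorphic on $\mathcal R(\Omega_r-K)$, and that no other admissible solution exists---with the uniqueness statement carrying the real weight.

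First I would settle existence. By Lemma \ref{lemma} the ansatz $u=x^lw(z)$ converts the operator of $(\mathcal P)$ into the Gauss hypergeometric equation (\ref{eq}) with the parameters (\ref{prm}); since $w_1(z)=F(a,b,c,z)$ solves (\ref{eq}), the function $U_l=x^lF(a,b,c,z)$ solves the equation. To verify the Cauchy data I use that $t=0$ is sent to $z=0$: because $c$ is not a non-positive integer the principal branch gives $F(a,b,c,0)=1$, whence $U_l(0,x)=x^l$; and since $\partial_t z=\frac{q^2}{n+2}\,t^{n+1}x^{-q}$ vanishes at $t=0$, the chain rule yields $\partial_t U_l(0,x)=x^lF'(a,b,c,0)\,\partial_t z|_{t=0}=0$.

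Next, holomorphy on the covering. The GHF is holomorphic at $z=0$ and ramifies only over $z=1$ and $z=\infty$; under $(t,x)\mapsto z$ the fibre $z=1$ is exactly $K_2$ and $z=\infty$ is exactly $K_1$, while the prefactor $x^l$ ramifies only over $x=0=K_1$. Thus the branch locus of $U_l$ is contained in $K=K_1\cup K_2$, and the hypotheses $c,\,c-a-b,\,a-b\notin\mathbb Z$ ensure that the local solution bases of (\ref{eq}) at $0,1,\infty$ are free of logarithmic terms, so that $U_l$ lifts to a single-valued holomorphic function on $\mathcal R(\Omega_r-K)$.

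The main obstacle is uniqueness, where the singular coefficient $\gamma/t$ is decisive. Away from the origin $S:t=0$ is non-characteristic, since the principal symbol restricted to $t=\mathrm{const}$ equals $x^m\ne0$ for $x\ne0$, and $S\setminus\{0\}$ lies in $\Omega_r-K$; hence on a simply connected neighbourhood of such a point every admissible solution is an ordinary holomorphic function, expandable as $U=\sum_{k\ge0}u_k(x)t^k$. Writing the equation as $\partial_t^2U+\frac\gamma t\partial_tU=t^nx^{-q}L_2U$ and using that $\partial_t^2+\frac\gamma t\partial_t$ multiplies $t^k$ by $k(k-1+\gamma)$, the order $t^{-1}$ forces $\gamma u_1=0$ while the datum $\partial_tU(0,x)=0$ forces $u_1=0$ outright, and for $k\ge2$ the factor $k(k-1+\gamma)$ is nonzero precisely because $\gamma$ is not a negative integer; the recursion then determines every $u_k$ from $u_0=x^l$. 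This is where one must also rule out the second Frobenius exponent $t^{1-\gamma}$ (or a logarithmic solution), which is singular at $t=0$ and hence incompatible with holomorphy along $S$. The local solution being thus unique and, by existence, equal to (\ref{sol}), I would finish by analytic continuation: $\mathcal R(\Omega_r-K)$ is connected, so the identity theorem propagates the coincidence of any admissible solution with (\ref{sol}) from the neighbourhood of $S$ to the entire covering space. I expect the genuinely delicate points to be the interplay of the Fuchsian exponents at $t=0$ with holomorphy along $S$, and the verification that the formal recursion produces a convergent series agreeing with the explicit hypergeometric solution.
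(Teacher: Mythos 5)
Your proposal is correct in substance and follows the paper's structure for existence and for the ramification analysis, but it diverges at the decisive step: uniqueness. The paper multiplies $L_\gamma u=0$ by $tx^{-m}$ to exhibit a Fuchsian problem $(\mathcal{P}')$ in the sense of Baouendi--Goulaouic, and then cites the Baouendi--Goulaouic theorem to get, in one stroke, existence and uniqueness of a holomorphic solution near $S$; the explicit form $x^lF(a,b,c,z)$ is then obtained by identifying that solution inside the two-parameter family $x^l(c_1w_1+c_2w_2)$ of Lemma \ref{lemma}. You instead establish existence directly (Lemma \ref{lemma} plus the checks $F(a,b,c,0)=1$ and $\partial_t z|_{t=0}=0$) and prove uniqueness by hand: Taylor-expanding any admissible solution in $t$ near a point of $S\setminus\{0\}$ and observing that the Fuchsian indicial factors $k(k-1+\gamma)$ are nonzero for $k\ge 2$ exactly when $\gamma\notin\mathbb{Z}^-$ (with $u_1=0$ forced by the datum, which also covers $\gamma=0$), then propagating by the identity theorem on the connected covering. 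This is essentially a self-contained proof of the special case of Baouendi--Goulaouic that the paper invokes, made economical by the fact that you only need the \emph{formal} recursion for uniqueness, since convergence is supplied by the explicit hypergeometric solution; the paper's route is shorter but rests on the cited theorem. Two minor points: your claim that the hypotheses $c,\,c-a-b,\,a-b\notin\mathbb{Z}$ are what allow $U_l$ to lift to $\mathcal{R}(\Omega_r-K)$ is not quite the right reason --- solutions of a linear ODE with holomorphic coefficients continue along all paths avoiding the singular points whether or not logarithms occur, so the lifting needs no such hypothesis (these conditions are used for the fundamental system and the connection formulas $(\mathcal{F}_1)$, $(\mathcal{F}_2)$); and your continuation argument shares with the paper the implicit convention about which sheet of the covering carries the Cauchy data and about branches re-crossing $S$ after monodromy, so no new gap is introduced there.
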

%\end{leftbar}
\begin{proof}
By multiplying the equation $L_\gamma u=0$ by $tx^{-m}$, we obtain the following equivalent Cauchy problem of Fuchs type in the sense of Baouendi-Goulaouic:
$$
(\mathcal{P'})\quad\left\{ 
\begin{array}{l}
t\partial^2_tu+\gamma\partial_tu- t^{n+1}x^{-q}L_xu=0,\\
U_l(0,x)=x^l.\\

\end{array}
\right.
$$

Since $\gamma\notin\mathbb{Z}^-=\{-1,-2,...\}$, by the Baouendi-Goulaouic theorem \cite{Baouendi}, there
is a unique holomorphic solution $U_l$ to the Cauchy problem $(\mathcal{P'})$. Next, by Lemma 2.1, $U_l=x^l(c_1w_1 +c_2w_2)$, where $c_1$ and $c_2$ are arbitrary constants, solves $L_\gamma u=0$. Taking into account the Cauchy data, we obtain $c_1=1$, and $c_2=0$.

Furthermore, by construction the solution $U_{l}$ is composed of a hypergeometric function, which is holomorphic on the universal covering of $\mathbb{D}- ({0,1,\infty})$ where $ \mathbb{D}$ is the
Riemann sphere. So, the study of the ramification and the singularities of the solution is reduced to those corresponding well-known properties of GHFs. The mapping 
\begin{equation*}
z(t,x)=\left(\frac{q}{n+2}\right)^2\frac{t^{n+2}}{x^{q}}
\end{equation*}
transforms 
\begin{eqnarray*}
S: t=0 &\quad\text{into}\quad z=0, \\
K_{2}: x^{q}-\left( \frac{q}{n+2}\right)^2 t^{n+2}=0 &\quad\text{into} \quad z=1,\\
K_{1} : x=0 &\quad\text{into}\quad z=\infty. \\
\end{eqnarray*}

Further, we notice that $U_l$ does not ramify on $t=0$, $x\neq0$, because of the Cauchy-Kowalevsky theorem.
It follows that $U_{l}$ is holomorphic on the universal
covering space $\mathcal{R}(\Omega _{r}-K)$. Equivalently, $U_l$ can be extended analytically along any curve starting in $\Omega_r$ without crossing the characteristic curves $K_1$ and $K_2$. In the next subsection we give the explicit representation of the analytic continuation of $U_l$. 
\end{proof}
\begin{corollary}
Depending on various parameters, the solution of the Cauchy
problem may be holomorphic across some parts of $K$: 
\begin{enumerate}
\item When $a\in-\mathbb{N}$, we have the following results:
\begin{enumerate}
\item $U_l$ is holomorphic on $K_1$ if and only if $l+aq\in\mathbb{N}$.
\item $U_l$ is always holomorphic on $K_2$.
\end{enumerate}
\item When $b\in-\mathbb{N}$, we have the following results:
\begin{enumerate}
\item $U_l$ is holomorphic on $K_1$ if and only if $l+bq\in\mathbb{N}$.
\item $U_l$ is always holomorphic on $K_2$.
\end{enumerate}
\item When $c-a\in-\mathbb{N}$, we have the following results:
\begin{enumerate}
\item $U_l$ is holomorphic on $K_1$ if and only if $l+bq\in\mathbb{N}$.
\item $U_l$ is always ramified around $K_2$.
\end{enumerate}
\item When $c-b\in-\mathbb{N}$, we have the following results:
\begin{enumerate}
\item $U_l$ is holomorphic on $K_1$ if and only if  $l+aq\in\mathbb{N}$. 
\item $U_l$ is always ramified around $K_2$.
\end{enumerate}
\item When $a,b,c-a,c-b\notin-\mathbb{N}$, we have the following results: 
\begin{enumerate}
\item $U_l$ is holomorphic on $K_1$ if and only if $-\alpha$ and $1+\alpha-A\in\mathbb{N}$.
\item $U_l$ is always ramified around $K_2$.

\end{enumerate}
\end{enumerate}

\end{corollary}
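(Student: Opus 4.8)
The plan is to transfer everything to the $z$-plane via the dictionary already used in the proof of the theorem, under which $S:t=0$, $K_2$ and $K_1:x=0$ correspond to $z=0$, $z=1$ and $z=\infty$ respectively. Since the solution is given by (\ref{sol}), namely $U_l=x^lF(a,b,c,z)$, and since $F(a,b,c,z)$ is holomorphic at $z=0$, the whole corollary reduces to deciding, for each regime of the parameters (\ref{prm}), whether $F$ stays single-valued at $z=1$ and at $z=\infty$. Two elementary mechanisms govern the degenerations: if $a\in-\mathbb{N}$ or $b\in-\mathbb{N}$ the defining series terminates and $F$ is a polynomial in $z$; otherwise I read off the local structure from the classical connection formulae
\[
F(a,b,c,z)=\frac{\Gamma(c)\Gamma(b-a)}{\Gamma(b)\Gamma(c-a)}(-z)^{-a}F(a,1-c+a,1-b+a,1/z)+\frac{\Gamma(c)\Gamma(a-b)}{\Gamma(a)\Gamma(c-b)}(-z)^{-b}F(b,1-c+b,1-a+b,1/z)
\]
at $z=\infty$ and
\[
F(a,b,c,z)=\frac{\Gamma(c)\Gamma(c-a-b)}{\Gamma(c-a)\Gamma(c-b)}F(a,b,a+b-c+1,1-z)+(1-z)^{c-a-b}\frac{\Gamma(c)\Gamma(a+b-c)}{\Gamma(a)\Gamma(b)}F(c-a,c-b,c-a-b+1,1-z)
\]
at $z=1$, both valid because the standing hypotheses $a-b,\,c-a-b,\,c\notin\mathbb{Z}$ exclude the logarithmic cases. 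The crucial observation is that a pole of one of the $\Gamma$-factors in a denominator makes the corresponding term vanish identically.

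For $K_2$ I would argue at $z=1$. All the ramification is carried by the factor $(1-z)^{c-a-b}$, whose exponent is non-integral by hypothesis. In cases 1 and 2 ($a$ or $b\in-\mathbb{N}$) the function $F$ is a polynomial, hence entire, so it cannot ramify at $z=1$: this is 1(b) and 2(b). In cases 3 and 4 the pole of $\Gamma(c-a)$, respectively $\Gamma(c-b)$, annihilates the first (regular) term of the $z=1$ formula, leaving only the ramified one; the residual factor $F(c-a,c-b,c-a-b+1,1-z)$ is then a polynomial in $1-z$, so $F$ behaves like $(1-z)^{c-a-b}$ times a polynomial and is genuinely ramified. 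In case 5 no $\Gamma$-pole occurs and the ramified term has nonzero coefficient, so again $F$ ramifies. This gives 3(b), 4(b), 5(b).

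For $K_1$ I would argue at $z=\infty$. Substituting $z=(q/(n+2))^2t^{n+2}x^{-q}$ turns the two leading exponents $(-z)^{-a}$ and $(-z)^{-b}$ into $x^{l+aq}$ and $x^{l+bq}$, each multiplied by a power series in $1/z=\mathrm{const}\cdot x^{q}t^{-(n+2)}$ that is holomorphic at $x=0$; hence holomorphy of $U_l$ across $K_1$ is equivalent to the surviving $x$-exponent being a non-negative integer. When $a\in-\mathbb{N}$ (case 1) $F$ is a polynomial in $z$ whose most singular $x$-power is $x^{l+aq}$, so holomorphy holds iff $l+aq\in\mathbb{N}$; case 2 is the mirror statement with $b$. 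When $c-a\in-\mathbb{N}$ (case 3) the pole of $\Gamma(c-a)$ deletes the $(-z)^{-a}$ branch and only $x^{l+bq}$ remains, giving the condition $l+bq\in\mathbb{N}$, and symmetrically case 4 leaves $x^{l+aq}$. In the generic case 5 both branches survive, so holomorphy requires simultaneously $l+aq\in\mathbb{N}$ and $l+bq\in\mathbb{N}$; here I would finish with the algebraic identities read directly from (\ref{prm}),
\[
l+aq=-\alpha,\qquad l+bq=1+\alpha-A,
\]
which recast the two conditions as ``$-\alpha$ and $1+\alpha-A\in\mathbb{N}$'' and prove 5(a).

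The step I expect to be the main obstacle is the careful bookkeeping of the degenerations: tracking precisely which $\Gamma$-factor acquires a pole in each case, confirming that the term it multiplies really vanishes (and is not rescued by a compensating pole in the accompanying hypergeometric factor), and checking that the residual factors $F(\cdot,\cdot,\cdot,1/z)$ and $F(\cdot,\cdot,\cdot,1-z)$ contribute only non-negative integer powers of $x^{q}$, respectively of $1-z$, so that the verdict is always dictated by the leading term. The exclusion of integer differences in the hypotheses is exactly what keeps these formulae free of logarithms and makes this bookkeeping clean.
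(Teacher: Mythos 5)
Your argument is correct and matches the paper's strategy almost everywhere: the same dictionary $S\mapsto 0$, $K_2\mapsto 1$, $K_1\mapsto\infty$, the terminating series for cases 1--2, connection formulas at infinity for reading off the $x$-exponents, and the same identities $l+aq=-\alpha$, $l+bq=1+\alpha-A$ to conclude 5(a). The one place you genuinely deviate is cases 3--4: the paper applies Pfaff's transformation $F(a,b,c,z)=(1-z)^{-b}F\left(c-a,b,c,\frac{z}{z-1}\right)$, whose series terminates when $c-a=-N$, exhibiting $x^lF$ as a finite sum in which every term carries the $x$-power $x^{l+bq}$ times a non-integer power of $x^q-\left(\frac{q}{n+2}\right)^2t^{n+2}$; you instead let the pole of $\Gamma(c-a)$ annihilate one term of each connection formula. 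The two mechanisms are equivalent --- the surviving term of your degenerate formula at $z=1$ is precisely Pfaff's terminating form rearranged in powers of $1-z$ --- and your version has the small advantage of treating $K_1$ and $K_2$ by one uniform device, while the paper's Pfaff route avoids having to justify that the connection formulas persist at the $\Gamma$-poles (they do, by continuity in the parameters, but that deserves a word in your write-up). The bookkeeping you flag as the main obstacle is settled exactly by the standing hypotheses $c$, $c-a-b$, $a-b\notin\mathbb{Z}$: they make the five cases mutually exclusive, keep the companion $\Gamma$-factors finite and nonzero, and --- a point both you and the paper leave tacit in 5(a) --- prevent the two branches, whose $x$-exponents are $-\alpha+kq$ and $1+\alpha-A+k'q$, from cancelling one another's singularities, since such a cancellation would force $a-b\in\mathbb{Z}$.
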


\begin{proof} We have 
$$
U_l(t,x)=x^lF(a,b,c,z),\quad\quad z=\left(\frac{q}{n+2}\right)^2\frac{t^{n+2}}{x^{q}}.
$$
Hence, by observing the singularities of the GHF, we get
\begin{enumerate}
\item When $a=-N\in-\mathbb{N}$, $F$ reduces to polynomial of degree $N$. Precisely, we have
$$
F(-N,b,c,z)=\sum_{i=0}^N \frac{(-N)_i(b)_i}{(c)_ii!}z^i.
$$
The term of degree $N$ of $x^l F$ is
$$
a_Nx^lz^N=Ct^{N(n+2)}x^{l-N(m-p+2)}=Ct^{N(n+2)}x^{l-Nq}.
$$
Therefore, $U_l$ is 
\begin{enumerate}
\item holomorphic on $K_1$ if and only if $l+aq\in\mathbb{N}$.
\item $U_l$ is always holomorphic on $K_2$.
\end{enumerate}
\item The case when $b\in-\mathbb{N}$ is treated similarly.
\item When $c-a=-N\in-\mathbb{N}$, by the Pfaff's identity, we have
\begin{eqnarray*}
F(a,b,c,z)&=&(1-z)^{-b}F\left(c-a,b,c,\frac{z}{z-1}\right)\\
&=&(1-z)^{-b}F\left(-N,b,c,\frac{z}{z-1}\right)\\
&=&\sum_{i=0}^Na_iz^i(1-z)^{-b-i}
\end{eqnarray*}
Hence, the last term of $x^lF$ is
\begin{eqnarray*}
a_Nx^lz^N(1-z)^{-b-N}&=&Ct^{N(n+2)}x^{l-qN}\left[ x^q-\left(\frac{q}{n+2}\right)^2t^{n+2}\right]^{-b-N}x^{q(N+b)}\\
&=&Ct^{N(n+2)}x^{l+qb}\left[ x^q-\left(\frac{q}{n+2}\right)^2t^{n+2}\right]^{c-a-b}\\
\end{eqnarray*}
Therefore, $U_l$ is 
\begin{enumerate}
\item holomorphic on $K_1$ if and only if $l+bq\in\mathbb{N}$.
\item $U_l$ is always ramified around $K_2$ since $c-a-b\notin\mathbb{Z}$.
\end{enumerate}
\item The case when $c-b\in-\mathbb{N}$ is treated similarly.
\item When $a,b,c-a,c-b\notin-\mathbb{N}$, we have
\begin{enumerate}
\item By the connexion formula $(\mathcal{F}_2)$ (see the next subsection), around $z=\infty$, namely, around $K_1$, we have
\begin{eqnarray*}
U_{l}(t,x) &=& A_3x^l(1-z)^{-a}F\left(\frac{1}{1-z}\right)+A_4x^l(1-z)^{-b}F\left(\frac{1}{1-z}\right) \\
\\
&=& A_3x^{aq+l}\left[x^{q}-\left( \frac{q}{n+2}\right)^2 t^{n+2}\right]^{-a}F\left(\frac{1}{1-z}\right)\\
 &+& A_4x^{bq+l}\left[x^{q}-\left( \frac{q}{n+2}\right)^2 t^{n+2}\right]^{-b}F\left(\frac{1}{1-z}\right)\\
&=& A_3x^{-\alpha}\left[x^{q}-\left( \frac{q}{n+2}\right)^2 t^{n+2}\right]^{-a}F\left(\frac{1}{1-z}\right)\\
 &+&A_4x^{1+\alpha-A}\left[x^{q}-\left( \frac{q}{n+2}\right)^2 t^{n+2}\right]^{-b}F\left(\frac{1}{1-z}\right).
\end{eqnarray*}
%\begin{eqnarray*}
%a 
%\end{eqnarray*}%
Therefore, $U_l$ is holomorphic on $K_1$ if and only if $-\alpha$ and $1+\alpha-A\in\mathbb{N}$.
\item By the connexion formula $(\mathcal{F}_1)$, around $z=1$, namely, around $K_2$, we observe that, since $c-a-b\notin\mathbb{Z}$, $(1-z)^{c-a-b}$ is always ramified around $z=1$, and then $U_l$ is ramified around $K_2$. 
\end{enumerate}
\end{enumerate}
\end{proof}
Here are some illustrative examples:
\begin{example}
Consider in $\mathbb{C}^{2}$, the Cauchy problem 
\begin{equation*}
\left\{ 
\begin{array}{l}
x^4\left(\partial^2_t u+\frac{1}{3t}\partial_t u\right)-tx\left( x^2\partial^2_x u-\frac{x}{2}\partial_x u\right)  =0,  \\
u(0,x)=x^3,\\
u_t(0,x)=0.
\end{array}
\right.
\end{equation*}
The solution $u(t,x)$ is given by: 
\begin{equation*}
U_3(t,x)=x^3+\frac{2}{7}t^3 . 
\end{equation*}
We observe that $U_3$ is holomorphic.
\end{example}

\begin{example}
Consider in $\mathbb{C}^{2}$, the Cauchy problem 
\begin{equation*}
\left\{ 
\begin{array}{l}
x^3\left(\partial^2_t u+\frac{1}{2t}\partial_t u\right)-t\left( x^2\partial^2_x u+\frac{x}{2}\partial_x u-u\right)  =0,  \\
u(0,x)=x^2,\\
u_t(0,x)=0.
\end{array}
\right.
\end{equation*}
The solution $U_2(t,x)$ is given by: 
\begin{equation*}
U_2(t,x)= x^2+\frac{2t^3}{5x} . 
\end{equation*}
We observe that $U_2$ is singular on $K_1 : x=0$.
\end{example}

\begin{example}
Consider in $\mathbb{C}^{2}$, the Cauchy problem 
\begin{equation*}
\left\{ 
\begin{array}{l}
x^3\left(\partial^2_t u-\frac{1}{3t}\partial_t u\right)-x^2\partial^2_x u+x\partial_x u =0,  \\
u(0,x)=x,\\
u_t(0,x)=0.
\end{array}
\right.
\end{equation*}
The solution $U_1$ is given by: 
\begin{equation*}
U_1(t,x)= x(1-\frac{z}{2})(1-z)^{-\frac{2}{3}},\ \text{where}\ z=\frac{9t^2}{4x^3} .
\end{equation*}
$U_1$ is ramified around $K_{2}: 4x^3-9t^2=0 $.
\end{example}

\begin{example}
The unique solution of the following  Cauchy problem 
\begin{equation*}
\left\{ 
\begin{array}{l}
x^3\left(\partial^2_t u-(2t)^{-1}\partial_t u\right)-t\left( x^2\partial^2_x u+3x\partial_x u-\frac{9}{4} u\right)  =0,  \\
u(0,x)=x^2,\\
u_t(0,x)=0
\end{array}
\right.
\end{equation*} is given by: 
\begin{equation*}
U_2(t,x)= \frac{(x^3-t^3)^{\frac{5}{6}}}{\sqrt{x}}. 
\end{equation*}
$U_2$ is singular on both $K_1 : x=0$ and $K_{2}: x^3-t^3=0 $.
\end{example}

\begin{remark}
When $A=B=n=0$ and $m=p$, the equation $L_\gamma u=0$ reduces to the Euler-Poisson-Darboux equation  
$$
\partial^2_t u-\partial^2_x u+\frac{\gamma}{t} \partial_t u=0.
$$
The form (\ref{sol}) becomes
$$
U_l(t,x)=x^lF\left(-\frac{l}{2},\frac{1-l}{2},\frac{\gamma+1}{2},\frac{t^2}{x^2}\right).
$$
Furthermore, if $\gamma=0$, we get the wave equation
$$
\partial^2_t u-\partial^2_x u=0.
$$
In this case, the solution is given by
$$
U_l(t,x)=x^lF\left(-\frac{l}{2},\frac{1-l}{2},\frac{1}{2},\frac{t^2}{x^2}\right),
$$
which reduces, by applying formula 15.1.9 of \cite{Abramowitz}
$$
F(a,a+\frac{1}{2},\frac{1}{2},z^2)=\frac{1}{2}\left[(1+z)^{-2a}+(1-z)^{-2a}\right],
$$
to the form
$$
U_l(t,x)=\frac{1}{2}\left[(x+t)^l+(x-t)^l\right],
$$
which is the well-known D'Alembert's formula for the problem $(\mathcal{P})$.

\end{remark}

\begin{remark}
If $\gamma \in\mathbb{Z}^{-}$, a null solution appears so that the uniqueness of the solution of $(\mathcal{P})$ fails. The
solutions take the form $$U_l(t,x)= \bar{U_l} + t^{1-\gamma}V(t,x),$$ where $\bar{U_l}$
is a particular solution of $(\mathcal{P})$, and $V$ is a solution of $L_{2-\gamma}u=0$. 
\end{remark}
\begin{example}
For any $\lambda\in\mathbb{C}$, $(x^3-t^3)^{\frac{1}{3}}+\lambda t^2$ is a solution of the following Cauchy problem 
\begin{equation*}
\left\{ 
\begin{array}{l}
x^4\left(\partial^2_t u-\frac{1}{t}\partial_t u\right)-tx\left( x^2\partial^2_x u-x\partial_x u\right)  =0,  \\
u(0,x)=x,\\
u_t(0,x)=0.
\end{array}
\right.
\end{equation*}

\end{example}

\subsection{Analytical continuation and ramification}

Using the connection formulas between the neighborhoods of the regular
singular points of the hypergeometric equation $0,1$ and $\infty$~(see \cite[%
p. 559]{Abramowitz}), we have for $\left\vert \arg\left( 1-z\right)
\right\vert $ $<\pi$:
\begin{eqnarray*}
\text{In} \left\vert 1-z\right\vert <1:&&\\
(\mathcal{F}_1) : \quad U_{l}(t,x) & = & A_{1}x^lF\left( a,b,1+a+b-c,1-z\right)\\
& + &  A_{2}x^l\left( 1-z\right) ^{c-a-b}F\left(
c-a,c-b,c-a-b+1,1-z\right),\\
\text{In} \left\vert 1-z\right\vert >1:&&\\
(\mathcal{F}_2) : \quad U_{l}(t,x) & = & A_{3}x^l\left(1-z\right) ^{-a}F\left(
a,c-b,1+a-b,\frac{1}{1-z}\right)\\
 &+&A_{4}x^l\left( 1-z\right)
^{-b}F\left( b,c-a,1-a+b,\frac{1}{1-z}\right),
\end{eqnarray*}
where the different constants are given by%
\begin{equation*}
\begin{array}{lcl}
A_{1}=\dfrac{\Gamma \left( c \right) \Gamma \left(
c -a-b\right) }{\Gamma \left( c -a\right) \Gamma \left( c
-b\right) }, &  & A_{2}=\dfrac{\Gamma \left( c \right)\Gamma \left( a+b-c
\right) }{\Gamma \left( a\right) \Gamma \left(
b\right) }, \\ 
&  &  \\ 
A_{3}=\dfrac{\Gamma \left(
c \right)\Gamma \left( b-a\right) }{\Gamma \left( b\right)\Gamma \left( c -a\right)  }, & 
& A_{4}=\dfrac{\Gamma \left(
c \right)\Gamma \left( a-b\right) }{\Gamma \left( a\right)\Gamma \left( c -b\right)  }. \\  
\end{array}%
\end{equation*}

\medskip
\noindent Formulas $(\mathcal{F}_1)$ and $(\mathcal{F}_2)$ enable us to study the ramification of $U_l$ around the characteristic surfaces. Let $P$ be a point belonging to $\Omega_r-K$ such that $\arg \left( 1- z(P)\right) =0$, with $z(P)\in(0,1)$, and let $\lambda_1$ and $\lambda_2$ be loops with basepoint $P$, which encircle $K_1$ and $K_2$, in the positive sence, respectively. Then, we have: 
\begin{eqnarray*}
U_l(\lambda_1(P))& =&U^{(1)}_{l}\left( P\right) +e^{2\pi i\left( c-a-b\right)
}U^{(2)}_l\left( P\right) , \\
\\
U_l(\lambda_2(P))& =&e^{2\pi ia}U^{(3)}_l\left( P\right) +e^{2\pi
ib}U^{(4)}_l\left( P\right),
\end{eqnarray*}%
where $U^{(i)}_l$ are the values of initial branches. 
\section{Series solutions}
Let 
$$
u_0(x)=\sum_{l=0}^{\infty}a_lx^l
$$
be an analytic function with radius of convergence $R>0$. The Cauchy problem (\ref{pb}) has a unique solution, which is given by
\begin{equation}
u(t,x)=\sum_{l=0}^{\infty}a_lU_l(t,x).\label{S}
\end{equation}
We focus here on the convergence of (\ref{S}).

Hereafter, we agree to use the following notations to describe majorant relations. We say that the formal power series $$g(z)=\sum_{l=0}^{\infty}A_lz^l$$ majorants the formal power series $$f(z)=\sum_{l=0}^{\infty}a_lz^l$$ if $$\vert a_l\vert\leq A_l,\ \forall l\geq0,$$ and then we use the Poincar\'e's notation: $f\ll g$.

To prove the convergence of (\ref{S}) we use the following lemma:

\begin{lemma}\label{lemma} If $a\geq b>c>0$, then 
\begin{equation*}
F(a,b,c;z)\ll \frac{\Gamma (c)\Gamma (a+b-c)}{\Gamma (a)\Gamma (b)}%
(1-z)^{c-a-b}.
\end{equation*}
\end{lemma}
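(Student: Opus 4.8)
The plan is to establish the majorant relation $F(a,b,c;z)\ll \frac{\Gamma(c)\Gamma(a+b-c)}{\Gamma(a)\Gamma(b)}(1-z)^{c-a-b}$ by comparing Taylor coefficients on both sides. The right-hand side is, up to the explicit constant, the hypergeometric function $F(a+b-c,?,?;z)$ evaluated via a known identity — but the cleanest route is to expand $(1-z)^{c-a-b}$ directly by the binomial series and compare its coefficients term-by-term with the coefficients of $F(a,b,c;z)$.

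First I would write the coefficient of $z^i$ in $F(a,b,c;z)$ as $\frac{(a)_i(b)_i}{(c)_i\,i!}$, which is nonnegative when $a,b,c>0$, so that the majorant condition $|a_l|\le A_l$ reduces to a genuine inequality between positive quantities. The coefficient of $z^i$ in $(1-z)^{c-a-b}=(1-z)^{-(a+b-c)}$ is $\frac{(a+b-c)_i}{i!}$ (using $(1-z)^{-s}=\sum_i \frac{(s)_i}{i!}z^i$ with $s=a+b-c>0$, which holds since $a\ge b>c$ forces $a+b-c>b>0$). Thus, after multiplying by the Gamma-constant $\frac{\Gamma(c)\Gamma(a+b-c)}{\Gamma(a)\Gamma(b)}$, the claim becomes the coefficientwise inequality
\begin{equation*}
\frac{(a)_i(b)_i}{(c)_i\,i!}\ \le\ \frac{\Gamma(c)\Gamma(a+b-c)}{\Gamma(a)\Gamma(b)}\cdot\frac{(a+b-c)_i}{i!}\qquad(\forall\, i\ge 0).
\end{equation*}
I would then rewrite each Pochhammer symbol as a ratio of Gamma functions, $(\lambda)_i=\Gamma(\lambda+i)/\Gamma(\lambda)$, so that the $\Gamma$-constant cancels on both sides and the inequality collapses to $\frac{\Gamma(a+i)\Gamma(b+i)\Gamma(a+b-c)}{\Gamma(a)\Gamma(b)\Gamma(c+i)}\le \frac{\Gamma(a+b-c+i)\Gamma(a+b-c)}{\Gamma(a+b-c)}$, i.e. to a monotonicity statement about the function $i\mapsto \frac{\Gamma(a+i)\Gamma(b+i)}{\Gamma(c+i)\Gamma(a+b-c+i)}$.

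The core of the argument is therefore to show this last ratio is nonincreasing in $i$ (and equals the required constant at $i=0$, which one checks directly). I would prove this by induction on $i$: forming the ratio of consecutive terms reduces the monotonicity to the elementary inequality $(a+i)(b+i)\le (c+i)(a+b-c+i)$, which after expanding is equivalent to $ab+(a+b)i \le (c)(a+b-c)+(a+b)i$, i.e. to $ab\le c(a+b-c)=c(a+b)-c^2$, that is $(a-c)(b-c)\le 0$ — no, expanding gives $0\le (a-c)(b-c)$ reversed, so I must track the sign carefully. The \textbf{main obstacle} is exactly this sign bookkeeping: the hypotheses $a\ge b>c>0$ must be used to guarantee the per-step inequality points the correct way. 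With $a\ge c$ and $b>c$ one has $(a-c)\ge 0$ and $(b-c)>0$, making $(a-c)(b-c)\ge 0$, which yields $(a+i)(b+i)\ge (c+i)(a+b-c+i)$ — the opposite of what a naive reading suggests — so I expect the genuine work to lie in correctly orienting these factors and confirming that the ratio of successive coefficients stays below $1$, after which the induction closes and the majorant relation follows.
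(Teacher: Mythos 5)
Your reduction to a coefficientwise inequality is the right frame --- and it is essentially how the literature the paper points to proceeds (the paper itself gives no proof of this lemma; it simply cites Ponnusamy--Vuorinen and Bentrad--Kichenassamy) --- but the argument as you describe it does not close, and the ``sign bookkeeping'' you flag at the end is not a technicality: it is fatal to an induction from $i=0$. Set
\[
R_i=\frac{(a)_i(b)_i}{(c)_i\,(a+b-c)_i},\qquad K=\frac{\Gamma(c)\Gamma(a+b-c)}{\Gamma(a)\Gamma(b)},
\]
so the lemma is exactly the statement $R_i\le K$ for all $i\ge 0$. Your own computation shows
\[
\frac{R_{i+1}}{R_i}=\frac{(a+i)(b+i)}{(c+i)(a+b-c+i)}\ge 1,
\]
since $(a+i)(b+i)-(c+i)(a+b-c+i)=ab-c(a+b-c)=(a-c)(b-c)\ge 0$ under $a\ge b>c>0$. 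Hence $(R_i)$ is \emph{nondecreasing}: the ratio of successive coefficients does not ``stay below $1$'' as you hoped, and the base case $R_0=1\le K$ combined with monotone growth yields no upper bound at all. The induction you sketch therefore cannot close.

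The missing idea is that $K$ is the \emph{limit} of $R_i$, not a value it starts below, and a nondecreasing sequence is bounded above by its limit. Writing $R_i=K\,\dfrac{\Gamma(a+i)\Gamma(b+i)}{\Gamma(c+i)\Gamma(a+b-c+i)}$ and using the asymptotics $\Gamma(x+i)/\Gamma(y+i)\sim i^{\,x-y}$ as $i\to\infty$ (Stirling), the Gamma ratio tends to $1$ because the exponents $(a-c)$ and $b-(a+b-c)=-(a-c)$ cancel; thus $R_i\nearrow K$ and so $R_i\le K$ for every $i$, which is precisely the claimed majorization. Alternatively, you can close the argument with no limits at all: the inequality $\Gamma(a+i)\Gamma(b+i)\le\Gamma(c+i)\Gamma(a+b-c+i)$ follows immediately from the log-convexity of $\Gamma$, since the pairs $\{a+i,\,b+i\}$ and $\{c+i,\,a+b-c+i\}$ have equal sums and the second is more spread out. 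Either of these two closing steps is what your proposal lacks; everything before that point is sound.
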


For the proof of this lemma see \cite{Bentrad09,Ponnusamy}.
\begin{theorem} The series (\ref{S}) converges for
$$\left\vert x^{q}-\left( \frac{q}{n+2}\right)^2 t^{n+2}\right\vert<\frac{R^{q}}{4}.$$ 
\end{theorem}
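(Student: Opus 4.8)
The plan is to combine classical Cauchy estimates for the Taylor coefficients $a_l$ of $u_0$ with the explicit hypergeometric form (\ref{sol}) of each $U_l$ and the majorant Lemma just stated. First I would fix $\rho$ with $0<\rho<R$; since $u_0$ has radius of convergence $R$, Cauchy's inequality gives a constant $M=M(\rho)$ with $|a_l|\le M\rho^{-l}$ for all $l$. It then suffices to majorize $|U_l(t,x)|$ on the domain and to prove that $\sum_l M\rho^{-l}|U_l|$ converges whenever $|w|<\rho^{q}/4$, where I abbreviate $w:=x^{q}-\big(\tfrac{q}{n+2}\big)^2 t^{n+2}$, so that $1-z=w/x^{q}$; letting $\rho\uparrow R$ then yields the stated radius $R^{q}/4$.

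Next I would bring the hypergeometric factor into a shape to which the Lemma applies. For large $l$ the parameters satisfy $a\sim -l/q$ and $b\sim -l/q$, hence they are negative and the Lemma (which requires positive parameters) cannot be used directly. The remedy is Euler's transformation $F(a,b,c,z)=(1-z)^{c-a-b}F(c-a,c-b,c,z)$: here $c-a\sim l/q$ and $c-b\sim l/q$ are positive for $l$ large while $c$ stays fixed, so after possibly swapping the first two entries the hypotheses $c-a\ge c-b>c>0$ hold for all but finitely many $l$ (the finitely many exceptional terms being bounded separately). Applying the Lemma to $F(c-a,c-b,c,z)$ gives, for $|z|<1$, the bound $|F(c-a,c-b,c,z)|\le M_l(1-|z|)^{-\Re(c-a-b)}$ with $M_l=\dfrac{\Gamma(c)\Gamma(c-a-b)}{\Gamma(c-a)\Gamma(c-b)}$; combined with $U_l=x^l(1-z)^{c-a-b}F(c-a,c-b,c,z)$ this controls $|U_l|$ by $|x|^l M_l$ times the factor $\big(|1-z|/(1-|z|)\big)^{\Re(c-a-b)}$, which equals $|x|^l M_l$ on the real segment $z\in[0,1)$.

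The quantitative heart is the estimate of $M_l$. Writing $c-a,\,c-b\sim l/q$ and $c-a-b\sim 2l/q$ and using the duplication formula (or Stirling), $\dfrac{\Gamma(2s)}{\Gamma(s)^2}\sim\dfrac{4^{s}\sqrt{s}}{2\sqrt{\pi}}$ with $s=l/q$, so that $M_l^{1/l}\to 4^{1/q}$. Feeding this exponential growth into the majorant and collecting the factors $\rho^{-l}$, $|x|^{l}$ and $M_l$ produces a geometric series whose ratio, evaluated on the initial curve $t=0$ (where $z=0$ and $w=x^{q}$, so the crude factor above is $1$), equals $4^{1/q}|x|/\rho$; the geometric series converges precisely for $|x|^{q}<\rho^{q}/4$, i.e.\ $|w|<\rho^{q}/4$. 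Carrying the same bookkeeping to general $(t,x)$, the powers of $1-z=w/x^{q}$ recombine so that the summability condition again reads $|w|<\rho^{q}/4$, and $\rho\uparrow R$ gives the theorem.

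I expect the main obstacle to be the uniformity of these estimates over the whole region, and in particular near the characteristic $K_1:x=0$. There $z\to\infty$, so the expansion of $F$ about $z=0$ (and hence the Lemma as applied above) is no longer valid; instead one must expand $F(c-a,c-b,c,z)$ about $z=\infty$ via the connection formula, reducing to a hypergeometric function of $1/z$ whose parameters are again $\sim l/q$ (so the Lemma applies once more), and then match the two regimes $|z|\le1$ and $|z|\ge1$ so as to recover $|1-z|$ rather than the lossy $1-|z|$. A secondary technical point is that $\alpha$ and $A$ (hence $a,b,c-a-b$) may be complex, so the real-variable Lemma must be invoked through absolute values of the Pochhammer symbols, the bounded argument factors $e^{-\Im(\cdot)\arg(1-z)}$ being absorbed into constants on compact subsets of the universal covering $\mathcal{R}(\Omega_r-K)$. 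Once uniformity on such compacta is established, the majorant series converges and the theorem follows.
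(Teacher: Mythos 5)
Your proposal shares the paper's skeleton: transform the hypergeometric factor, majorize it by the majorant lemma (Lemma 3.1), extract the exponential rate $4^{1/q}$ from the Gamma quotient by Stirling (your $M_l^{1/l}\to 4^{1/q}$ is the paper's $C_l=\mathcal{O}(2^{2l/q})$), and run a root test against $|a_l|^{1/l}\sim R^{-1}$. Even the complex-parameter point you defer to the end is treated in the paper exactly as you suggest, namely by majorizing coefficients through $\vert (a)_n\vert \le (\vert a\vert)_n$ and $1/\vert (c)_n\vert\le 1/(\vert c+1\vert -1)_n$.

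The genuine gap is your final recombination step, and it cannot be repaired within your route. You apply Euler's transformation, $F(a,b,c,z)=(1-z)^{c-a-b}F(c-a,c-b,c,z)$, and then the lemma to $F(c-a,c-b,c,z)$; but the lemma's singular factor for these parameters is $(1-z)^{c-(c-a)-(c-b)}=(1-z)^{-(c-a-b)}$, which cancels Euler's prefactor identically. You record this yourself: on the real segment your bound is $|U_l|\le |x|^l M_l$, with no dependence on $1-z$ left. Writing $w:=x^{q}-\left(\frac{q}{n+2}\right)^{2}t^{n+2}$, so that $1-z=w/x^{q}$, the summability condition your estimate actually supports is $4^{1/q}|x|<\rho$, i.e. $|x|^{q}<\rho^{q}/4$; your assertion that ``the powers of $1-z$ recombine so that the condition again reads $|w|<\rho^{q}/4$'' is false, since for $z\in(0,1)$ one has $|w|=|x|^{q}(1-z)<|x|^{q}$, so $|w|<\rho^{q}/4$ is a strictly weaker hypothesis than the one you verified, and the points it admits near $K_2$ (where $z\to1$ and $|w|\to0$ while $|x|$ stays of moderate size) are exactly those your bound never reaches. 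No amount of bookkeeping recovers them, because after the Euler cancellation there is no surviving power of $(1-z)$ growing linearly in $l$ to combine with $x^{l}$ into $w^{l/q}$. This is precisely why the paper uses Pfaff's identity $F(a,b,c,z)=(1-z)^{-b}F\left(c-a,b,c,\frac{z}{z-1}\right)$ rather than Euler's: there the majorization is carried out in the variable $\zeta=z/(z-1)$, whose singular factor $(1-\zeta)^{-(\cdot)}$ converts into a \emph{positive} power of $(1-z)$, and the net estimate claimed is $F(a,b,c,z)\ll C_l(1-z)^{\frac{l}{q}+\eta_4}$; only then does $x^{l}(1-z)^{l/q}=w^{l/q}$ appear, yielding $\limsup_{l}|U_l|^{1/l}\le 2^{2/q}|w|^{1/q}$ and hence the stated region. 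The entire content of the theorem --- that the convergence region is described by $w$ rather than by $x$ --- lives in that surviving power of $(1-z)$, and your choice of transformation destroys it.
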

\begin{proof} 
By applying the Pfaff's identity to the hypergeometric part of $U_l$, we get
\begin{equation*}
F(a,b,c,z)=(1-z)^{-b}F\left(c-a,b,c,\frac{z}{z-1}\right). 
\end{equation*}
From the observations,
$$
\vert(a)_n\vert\leq (\vert a\vert)_n
$$
and
$$
\frac{1}{\vert(c)_n\vert}\leq \frac{1}{(\vert c+1\vert-1)_n},
$$
it follows that 
$$
F\left(c-a,b,c,\frac{z}{z-1}\right)\ll F\left(\frac{l}{q}+\eta_1,\frac{l}{q}+\eta_1,\eta_3,\frac{z}{z-1}\right),
$$
where
$$
\eta_1=\frac{\vert\alpha\vert }{q}+\vert c\vert,\ \eta_2=\frac{\vert1+\alpha-A\vert }{q},\ \eta_3=\vert c+1\vert-1.
$$
Then, by applying the Lemma \ref{lemma}, we obtain
$$
F\left(c-a,b,c,\frac{z}{z-1}\right)\ll C_l(1-z)^{\frac{2l}{q}+\eta_1+\eta_2-\eta_3},
$$
where 
\begin{equation*}
C_l=\frac{\Gamma(\eta_3)\Gamma(\frac{2l}{q}+\eta_1+\eta_2-\eta_3)}{\Gamma(\frac{l}{q}+\eta_1)\Gamma(\frac{l}{q}+\eta_2)}. 
\end{equation*}
Consequently, the hypergeometric part is estimated as follows
$$
F(a,b,c,z)\ll C_l(1-z)^{\frac{l}{q}+\eta_4},
$$
where 
$$
\eta_4=\eta_1+\eta_2-\eta_3-\frac{1+\alpha-A}{q}.
$$
Stirling's formula gives $C_l=\mathcal{O}\left( 2^{\frac{2l}{q}}\right) $ for $l$
large. Therefore, 
\begin{equation*}
\limsup_{l\rightarrow\infty}|U_{l}|^{1/l}\leq2^{\frac{2}{q}}\left\vert x^{q}-\left( \frac{q}{n+2}\right)^2 t^{n+2}\right\vert^{\frac{1}{q}}.
\end{equation*}
It follows that the series (\ref{S}) converges for 
$$2^{\frac{2}{q}}\left\vert x^{q}-\left( \frac{q}{n+2}\right)^2 t^{n+2}\right\vert^{\frac{1}{q}}<R,$$ or equivalently, for
$$\left\vert x^{q}-\left( \frac{q}{n+2}\right)^2 t^{n+2}\right\vert<\frac{R^{q}}{4}.$$ 
\end{proof}

\end{document}